\documentclass[11pt]{amsart}
\usepackage[margin=3cm]{geometry}
\title{The two-sided peak polynomial}
\author{Alperen \"{O}zdemir} 
\address{Department of Mathematics, KTH Royal Institute of Technology, Stockholm, Sweden} 
\email{alpereno@kth.se} 
\thanks{This work was supported by the Swedish Research Council (VR), grant 2022-03875, and the Knut and Alice Wallenberg Foundation.}

\usepackage{amssymb, amsmath, amsthm, mathtools}
\usepackage[colorlinks=true, urlcolor=black,linkcolor=blue, citecolor=blue]{hyperref}
\usepackage{tikz,tikz-cd}
\usetikzlibrary{decorations.pathreplacing,calc}
\usepackage[shortlabels]{enumitem} 

\newtheorem{theorem}{Theorem}
\newtheorem{lemma}{Lemma}[section]

\theoremstyle{definition}

\newtheorem{remark}{Remark}[section]
\newtheorem{proposition}{Proposition}[section]

\ifdefined\NewCommandCopy
  \NewCommandCopy\latexbibitem\bibitem
\else
  \let\latexbibitem\bibitem
  \usepackage{xparse}
\fi

\ExplSyntaxOn
\RenewDocumentCommand{\bibitem}{O{}m}
 {
  \lullaby_bibitem:en { \lullaby_bibitem_check:nn { #1 } { \exp_not:n {#1}} } {#2}
 }
\cs_new_protected:Nn \lullaby_bibitem:nn { \latexbibitem[#1]{#2} }
\cs_generate_variant:Nn \lullaby_bibitem:nn { e }
\cs_new:Nn \lullaby_bibitem_check:nn
 {
  \str_case:nnF {#1}
   {
   {Mac15}{Mac1916}
    {Mac16}{Mac1916}
   }
   {#2}
 }
\ExplSyntaxOff

\begin{document}

\begin{abstract}
We derive a generating function identity for the joint distribution of the numbers of peaks of a permutation and its inverse, via enriched $P$-partitions. The coefficients of the corresponding peak polynomial $W_n(s,t)$ satisfy a second-order recurrence. A martingale formulation of this recurrence yields a bivariate central limit theorem, showing that the two statistics are asymptotically independent. We also give an exact closed form for their covariance, which is of order $n^{-1}$.
\end{abstract}

\maketitle

\section{Introduction}
A permutation $\pi \in S_n$ has a \emph{peak} at position $i$ if
$\pi(i-1) < \pi(i) > \pi(i+1)$, and we write $\mathrm{pk}(\pi)$ for
the number of peaks. The study of peaks originates with MacMahon
\cite{Mac15}, who enumerated permutations by descent and peak sets as
part of his systematic theory of permutation statistics. The
\emph{peak polynomial} is defined as
\[
W_n(t) = \sum_{\pi\in S_n} t^{\mathrm{pk}(\pi)+1}.
\]
Warren and Seneta \cite{WS96} proved that $W_n(t)$ is real-rooted. Stembridge's theory of enriched $P$-partitions \cite{Stem97} provides
the generating function identity
\begin{equation}\label{peakg0}
\frac{1}{2}\frac{(1+t)^{n+1}}{(1-t)^{n+1}}
W_n\!\left(\frac{4t}{(1+t)^2}\right) = \sum_{k\geq 0}(2k)^n\, t^k.
\end{equation}
This perspective was
further developed through the peak algebra of the symmetric group
\cite{Nym03} and its extension via type $B$ quasisymmetric functions
\cite{P06}.

Given this classical framework, we study the joint distribution of peaks in a permutation and its inverse. For a uniform $\pi \in S_n$, define the bivariate peak polynomial
\begin{equation}\label{Wndef}
W_n(s,t) = \sum_{\pi\in S_n} s^{\mathrm{pk}(\pi)+1}\,
t^{\mathrm{pk}(\pi^{-1})+1}.
\end{equation}

Our first result gives an explicit expression for $W_n(s,t)$ via enriched
$P$-partitions and Petersen's convolution identity \cite{P06}.

\begin{theorem}\label{genfnthm}
\[
\frac{1}{4}\left(\frac{(1+s)(1+t)}{(1-s)(1-t)}\right)^{n+1}
W_n\!\left(\frac{4s}{(1+s)^2},\frac{4t}{(1+t)^2}\right)
= \sum_{k,l\geq 0}\left(\sum_{m=1}^{n}\binom{2kl}{m}
\binom{n-1}{m-1}2^m\right) s^k t^l.
\]
\end{theorem}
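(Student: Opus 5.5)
We follow the classical route for the two-sided Eulerian polynomial (Gessel, Petersen), replacing ordinary $P$-partitions with Stembridge's enriched $P$-partitions and the ordinary convolution with Petersen's convolution for peaks \cite{P06}.

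\emph{One-sided ingredient.} Stembridge's theory gives, for each $\pi\in S_n$, a weight $\Omega_\pi(k)$: the number of enriched $\pi$-compatible maps $[n]\to\{\pm1,\dots,\pm k\}$ (with the usual convention at $0$). This weight depends only on the peak set of $\pi$, and summing over $k$ gives
\[
\sum_{k\ge 0}\Omega_\pi(k)\,t^k=\frac{1}{2}\,\frac{(1+t)^{n+1}}{(1-t)^{n+1}}\left(\frac{4t}{(1+t)^2}\right)^{\mathrm{pk}(\pi)+1}.
\]
Summing over $\pi$, every map $[n]\to\{\pm1,\dots,\pm k\}$ is compatible with exactly one $\pi$. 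This yields \eqref{peakg0}, with $(2k)^n$ counting all such maps.

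\emph{Two-sided step.} We multiply the identities for $\pi$ (in $s$) and for $\pi^{-1}$ (in $t$) and sum over $\pi$. The left side of Theorem~\ref{genfnthm} then equals $\sum_{k,l}s^kt^l\sum_{\pi}\Omega_\pi(k)\,\Omega_{\pi^{-1}}(l)$, since the two prefactors $\tfrac12$ combine to $\tfrac14$. So it suffices to show
\[
\sum_{\pi\in S_n}\Omega_\pi(k)\,\Omega_{\pi^{-1}}(l)=\sum_{m=1}^n\binom{2kl}{m}\binom{n-1}{m-1}2^m .
\]

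\emph{Main step: a bijective count.} We interpret the left side as counting pairs $(f,g)$ with $f$ enriched-compatible with $\pi$ and $g$ with $\pi^{-1}$. Following the biword/two-line array argument used for double descents, such pairs correspond to $n$-element multisets of ``signed letters'' drawn from a $2k\times 2l$ alphabet, with $\pi$ recovered by sorting. The sign constraints of enriched $P$-partitions (strict versus weak inequalities according to the signs) cut this down. Petersen's convolution theorem \cite{P06}, $\Omega_{\cdot}$-product formula $\sum_{\sigma\tau=\pi}$, encodes exactly this: the internal product of peak functions corresponds to a peak function at the product level $2kl$. We will take that theorem as given and deduce that the count equals the number of enriched compatible maps for a ``level $2kl$'' structure of size $n$.

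\emph{Final count.} The count is obtained by choosing which $m$ of the $2kl$ cells are used, giving $\binom{2kl}{m}$. Distributing $n$ elements into the $m$ ordered nonempty used cells gives $\binom{n-1}{m-1}$. A sign choice per used block gives $2^m$, since within a block only the first element's sign is free.

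\emph{Expected obstacle.} The hardest part is correctly matching Petersen's convolution normalization, in particular the factor $2kl$ and the boundary and sign conventions at $0$, to the counting formula. We will verify this matching against small cases: $n=1$ gives $4kl$, and $n=2$ should give $2\cdot 2kl + 4\binom{2kl}{2}$.
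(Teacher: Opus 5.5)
Your proposal is correct and follows the paper's proof. Like the paper, you multiply the per-permutation Stembridge identities for $\pi$ and $\pi^{-1}$, collapse $\sum_{\pi}\Omega(\pi,k)\,\Omega(\pi^{-1},l)$ to $\Omega(\mathrm{id},2kl)$ via Petersen's convolution, and count enriched partitions of the chain; your count $\binom{2kl}{m}\binom{n-1}{m-1}2^m$ is exactly the paper's $c_m(\mathrm{id})=2^m\binom{n-1}{m-1}$. You should state outright that your ``level $2kl$ structure'' is Petersen's identity specialised at $\pi=\mathrm{id}$ (every factorisation $\tau\sigma=\mathrm{id}$ has $\sigma=\tau^{-1}$), and then the biword heuristic is unnecessary.
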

The coefficients $w_n(a,b)$ of $W_n(s,t)$ satisfy a second-order
recurrence derived in Section~\ref{sec:genfn}. In contrast to the
univariate case, where real-rootedness of $W_n(t)$ implies stability,
$W_3(s,t)$ is not stable as a bivariate polynomial
(Proposition~\ref{prop:unstable}).

Warren and Seneta \cite{WS96} proved a central limit theorem for $\mathrm{pk}(\pi)$ for uniform permutations via local dependence. In \cite{Ozd22},
martingale methods are used to establish asymptotic normality along with Berry-Esseen bounds. A central limit theorem
for peaks in a fixed conjugacy class of $S_n$ was proved by Fulman,
Kim, and Lee \cite{FKL19} via analytic combinatorics. The joint distribution of descents in a permutation and its inverse
has been studied by Vatutin \cite{V96}, Chatterjee and Diaconis
\cite{Chat-Diaconis17}, and in \cite{Ozd22}; the present paper
studies peaks in a permutation and its inverse jointly.

 Let $M_n = \mathrm{pk}(\pi)+1$ and $M'_n = \mathrm{pk}(\pi^{-1})+1$
under the uniform distribution on $S_n$, with common mean $\mu_n =
(n+1)/3$ and variance $\sigma_n^2 = 2(n+1)/45$ from
\cite{WS96}. 
\begin{theorem}\label{mainthm0}
As $n\to\infty$,
\[
\left(\frac{M_n - \mu_n}{\sigma_n},\,
\frac{M'_n - \mu_n}{\sigma_n}\right) \;\Longrightarrow\; N(0, I_2).
\]
\end{theorem}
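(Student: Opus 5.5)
The plan is to prove Theorem~\ref{mainthm0} with the Cram\'er--Wold device combined with a martingale central limit theorem. The martingale comes from the second-order recurrence for the coefficients $w_n(a,b)$ derived in Section~\ref{sec:genfn} from Theorem~\ref{genfnthm}.

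\textbf{Step 1: a Markov chain realizing the recurrence.} We read the recurrence as the law of a Markov chain $(M_n,M'_n)_{n\ge1}$. Concretely, $\pi_{n}$ is obtained from $\pi_{n-1}$ by a uniform random insertion, and the recurrence gives the conditional probabilities that $(M_n,M'_n)-(M_{n-1},M'_{n-1})$ equals $(0,0)$, $(1,0)$, $(0,1)$ or $(1,1)$. These probabilities are explicit rational functions of $(M_{n-1},M'_{n-1},n)$. For a single coordinate, the univariate recurrence $w_n(a)=a\,w_{n-1}(a)+(n-2a+2)w_{n-1}(a-1)$ gives
\[
\mathbb E[M_n\mid\mathcal F_{n-1}]=M_{n-1}+\tfrac{n-2M_{n-1}}{n}.
\]
It follows that $X_n=\frac{(n+1)(n+2)}{\cdot}\,(M_n-\mu_n)$, suitably normalized, is a martingale, as in \cite{Ozd22}. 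The same holds for $M'_n$. We take the recurrence from Section~\ref{sec:genfn} for granted and check that its marginals reduce to this univariate form, so both coordinates are linear martingale transforms $Y_n=c_n(M_n-\mu_n)$ and $Y'_n=c_n(M'_n-\mu_n)$ with $c_n\asymp n^2$.

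\textbf{Step 2: Cram\'er--Wold.} For fixed $(\alpha,\beta)$, consider the scalar martingale $Z_n=\alpha Y_n+\beta Y'_n$ with differences $D_k$. We apply the martingale CLT (McLeish or Hall--Heyde), which requires two things:
\begin{itemize}
\item[(i)] the conditional Lindeberg condition, which is immediate because the increments of $M$ and $M'$ are bounded by $1$ and so $|D_k|\lesssim k^2$, while the total variance is of order $n^5$;
\item[(ii)] convergence in probability of $\sum_k\mathbb E[D_k^2\mid\mathcal F_{k-1}]/\mathrm{Var}(Z_n)$ to $1$.
\end{itemize}
Expanding, the conditional variance involves $\mathrm{Var}(\Delta M_k\mid\mathcal F_{k-1})$, $\mathrm{Var}(\Delta M'_k\mid\mathcal F_{k-1})$ and the conditional covariance $\mathrm{Cov}(\Delta M_k,\Delta M'_k\mid\mathcal F_{k-1})$. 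The diagonal terms are handled exactly as in the univariate case: they are smooth functions of $M_{k-1}/k$, which concentrates at $1/3$ by Chebyshev using $\sigma_k^2=O(k)$. This yields the limiting variance $(\alpha^2+\beta^2)\sigma_n^2$ after normalization.

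\textbf{Step 3: the cross term (main obstacle).} The key step is to show that the conditional covariance of the joint increments is $o(1)$ on average, more precisely that its contribution is $o(n^5)$ after weighting by $c_k^2$. From the recurrence, the probability of a joint increment $(1,1)$ should factor as the product of the marginal probabilities up to a correction of order $1/k$. This matches the paper's claim that the covariance of $M_n$ and $M'_n$ is $O(n^{-1})$. I would first compute this defect explicitly as a rational function of $(M_{k-1},M'_{k-1},k)$. I would then substitute the concentration values $M_{k-1},M'_{k-1}\approx k/3$ and bound the remainder by Cauchy--Schwarz with the second moments. Summing $k^4\cdot O(1/k)$ gives $O(n^4)=o(n^5)$.

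Combining the steps, the normalized pair is asymptotically normal with identity covariance for every linear combination. By Cram\'er--Wold this gives $N(0,I_2)$, after translating back from $Y_n,Y'_n$ to $(M_n-\mu_n)/\sigma_n$ and $(M'_n-\mu_n)/\sigma_n$.

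If the recurrence in Section~\ref{sec:genfn} is not directly of this one-step Markov form, because its coefficients involve second-order shifts, the fallback is to derive the needed moment identities from Theorem~\ref{genfnthm} by differentiation. That route gives $\mathrm{Cov}(M_n,M'_n)=O(1/n)$ directly, and the joint CLT would then follow from the martingale argument applied to each coordinate together with this covariance bound.
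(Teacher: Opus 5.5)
There is a genuine gap, and it sits in Step~1, on which Steps~2--3 rest. You need one filtration in which both $n(n-1)(M_n-\mu_n)$ and $n(n-1)(M'_n-\mu_n)$ are martingales and the joint law of the increments is controlled. Neither source you name provides it.

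\emph{The insertion process does not.} Inserting $n$ at position $j$ turns $\pi_{n-1}^{-1}$ into the word obtained by raising the entries $\ge j$ by one and appending $j$. So $\mathrm{pk}(\pi^{-1})$ increases exactly when $\pi_{n-1}^{-1}$ ends in an ascent and $j\le\pi_{n-1}^{-1}(n-1)$. The conditional probability of this is $\mathbf 1[\text{final ascent}]\,\pi_{n-1}^{-1}(n-1)/n$, which is not a function of $(M_{n-1},M'_{n-1})$ and is not $(n-2M'_{n-1})/n$. Hence $M'$ is not a martingale transform in that filtration, and $(M,M')$ is not Markov there.

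\emph{The recurrence does not either.} \eqref{eq:wnrec} is genuinely second order. Its signed term $(n-2)\bigl[w_{n-2}(a,b)-w_{n-2}(a-1,b)-w_{n-2}(a,b-1)+w_{n-2}(a-1,b-1)\bigr]$ cannot be read as transition probabilities out of the time-$(n-1)$ state. So the ``defect'' you plan to write in Step~3 as a rational function of $(M_{k-1},M'_{k-1},k)$ does not exist in that form.

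\emph{Your fallback does not close the gap.} Marginal CLTs plus $\mathrm{Cov}(M_n,M'_n)=O(1/n)$ do not imply joint normality: take $(X,\varepsilon X)$ with $X\sim N(0,1)$ and $\varepsilon$ an independent random sign. Cram\'er--Wold needs every $\alpha M_n+\beta M'_n$ to be asymptotically normal, and coordinatewise martingale arguments cannot give that.

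The missing idea is to build an auxiliary process, not the insertion process, whose time-$i$ marginal is $\nu_i$. Dividing \eqref{eq:wnrec} by $i\cdot i!$ gives $\nu_i=\Lambda_i\nu_{i-1}+\frac{i-2}{i^2(i-1)}\Delta^2_i$.
\begin{itemize}
\item[(a)] $\Lambda_i$ is an honest product kernel: each coordinate makes an independent Bernoulli step with the univariate probability $(i-2M_{i-1})/i$.
\item[(b)] $\Lambda_i\nu_{i-1}$ and $\nu_i$ are within total variation distance $2(i-2)/(i^2(i-1))$. So the remainder can be realised by a correction $Y_i$ after the $\Lambda_i$-step that is nonzero only with probability $O(i^{-2})$.
\item[(c)] The martingale differences $X_i,X'_i$ are then functions of conditionally independent $U_i,U'_i$. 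Hence $\sum_i\mathbf E_{i-1}[X_iX'_i]=0$ exactly.
\item[(d)] By Borel--Cantelli there are almost surely only finitely many nonzero corrections, so they vanish after division by $s_n\asymp n^{5/2}$.
\end{itemize}
After this, your Lindeberg check, the diagonal conditional variances, and the Helland or Cram\'er--Wold step go through as you describe.

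Your own Markov framework could also be rescued. Fold the correction into the joint kernel by setting $\mathbf P\bigl((1,1)\mid a',b'\bigr)=p_i(a')p_i(b')+\frac{i-2}{i^2(i-1)}\,\nu_{i-2}(a',b')/\nu_{i-1}(a',b')$, with $p_i(a)=(i-2a)/i$. This reproduces $\nu_i$, keeps the univariate transitions exact, and makes the expected conditional covariance $\frac{i-2}{i^2(i-1)}$. You would then have to verify that $\mathbf P\bigl((1,0)\mid a',b'\bigr)$ and $\mathbf P\bigl((0,1)\mid a',b'\bigr)$ stay nonnegative at every state.

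Two minor slips: the univariate recurrence is $w_n(a)=2a\,w_{n-1}(a)+(n-2a+2)w_{n-1}(a-1)$, and the martingale normaliser is $n(n-1)$.
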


Our proof is based on a recurrence for the coefficients of $W_n(s,t)$,
which admits a martingale decomposition. The multivariate martingale
central limit theorem of Helland \cite{He82} then yields
convergence, with asymptotic independence following from the
conditional independence of the two coordinates at each step. An
exact formula for $\mathrm{Cov}(M_n,M_n')$
and its rate of decay are given in Section~\ref{sec:martingale}.

\section{The bivariate peak polynomial}\label{sec:genfn}

\subsection{Enriched $P$-partitions}
Stanley develops a theory of $P$-partitions in \cite{Stan72} (originally his 1971 Harvard PhD thesis), obtaining generating functions for descent statistics in rational form. Stembridge~\cite{Stem97} introduces enriched $P$-partitions and the 
algebra $\Pi$ of peak quasisymmetric functions, obtaining as a consequence 
the generating function \eqref{peakg0} for peaks of permutations. Consider the following linear ordering on $\mathbb{Z} \backslash \{0\}:$
\begin{equation*}
-1 < +1 < -2 < +2 < -3 < +3 < \cdots
\end{equation*}
Let $P$ be a finite partially ordered set with $n$ elements. An \textit{enriched $P$-partition} is a function $f: P \rightarrow \mathbb{Z} \backslash \{0\}$ such that for all $x \prec y$ in $P$, we have
\begin{itemize}
\item[(i)]$ f(x) \leq f(y),$
\item[(ii)]$ f(x)=f(y) > 0 $ implies $x \prec y,$
\item[(iii)]$ f(x)=f(y) < 0 $ implies $x \succ y.$
\end{itemize}

Define the \textit{order polynomial} $\Omega(P,k)$ to be the number of 
enriched $P$-partitions with range $[-k,k]$. By 
\cite[Corollary~1.1.1]{P06},
\begin{equation*}
\Omega(P,k)= \sum_{\pi \in \mathcal{L}(P)} \Omega(\pi,k)
\end{equation*}
where $\mathcal{L}(P)$ is the set of linear extensions of $P,$ identified with permutations in $S_n.$ An important special case is that if $P$ is an antichain over $n$ elements, then $\mathcal{L}(P)=S_n$ and $\Omega(P,k)=(2k)^n,$ which leads to \eqref{peakg0}. In particular, 
\begin{equation*}
\frac{1}{2} \frac{(1+t)^{n+1}}{(1-t)^{n+1}} \left( \frac{4t}{(1+t)^2} \right)^{\mathrm{pk}(\pi)+1}= \sum_{k \geq 0} \Omega(\pi, k) t^k.
\end{equation*}
The order polynomial can be evaluated for a given $\pi$ as
\begin{equation}\label{orderexp}
\Omega(\pi, k) = \sum_{m=1}^n c_m(\pi) \binom{k}{m},
\end{equation}
where $c_m(\pi)$ is the number of surjective enriched $\pi$-partitions taking values from $[-m,m]$ (integers between $-m$ and $m$). Gessel \cite[Theorem~11]{Ges84} 
proved a convolution identity for ordinary $P$-partitions; Petersen 
\cite{P06} extended it to the enriched setting:
\begin{equation}\label{multi}
\Omega(\pi,2kl) = \sum_{\tau \sigma = \pi} \Omega(\tau, k) \Omega(\sigma,l).
\end{equation}
\subsection{Proof of Theorem~\ref{genfnthm}}

Recall $W_n(s,t)$ from \eqref{Wndef}. Define
\begin{equation*}
F_{n}(s,t) \equiv \frac{1}{4} \left( \frac{(1+t)(1+s)}{(1-t)(1-s)}\right)^{n+1} W_n\!\left( \frac{4s}{(1+s)^2}, \frac{4t}{(1+t)^2} \right).
\end{equation*}
Applying the generating function identity \eqref{peakg0} in each variable
and summing over $\pi\in S_n$ gives
\[
F_n(s,t)=\sum_{k,l\ge 0}\!\left(\sum_{\pi\in S_n}\Omega(\pi,k)\,\Omega(\pi^{-1},l)\right)s^k t^l.
\]
Every factorisation $\tau\sigma=\mathrm{id}$ is of the form
$\tau=\pi$, $\sigma=\pi^{-1}$ for a unique $\pi\in S_n$, so
applying \eqref{multi} with $\pi=\mathrm{id}$ gives
\begin{equation*}
\sum_{\pi\in S_n}\Omega(\pi,k)\,\Omega(\pi^{-1},l)=\Omega(\mathrm{id},2kl).
\end{equation*}
Then we evaluate $c_m(\mathrm{id})$ by specialising $\pi=\mathrm{id}$
in \eqref{orderexp}. The identity permutation has no descents, so $f(i)=f(j)$ for $i<j$ only if $f(i)>0$.
Starting from $f(1)\in\{-1,+1\}$ gives two choices; at each subsequent step there are
three possibilities for $f(i+1)$, so
\[
c_m(\mathrm{id}) = 2\cdot[x^{m-1}](1+2x)^{n-1} = 2^m\binom{n-1}{m-1}.
\]
Substituting $k\mapsto 2kl$ in \eqref{orderexp} with $\pi=\mathrm{id}$ gives
\begin{equation}\label{Fexplicit}
F_n(s,t)= \sum_{k,l \geq 0} \left( \sum_{m=1}^{n} \binom{2kl}{m} \binom{n-1}{m-1} 2^m \right) s^k t^l,
\end{equation}
which is the statement of Theorem~\ref{genfnthm}. \qed

\subsection{The coefficient recurrence}

The coefficients $F_{n,kl}$ of $F_n(s,t)$, defined by \eqref{Fexplicit}, satisfy a
second-order linear recurrence. Setting
\begin{equation}\label{fnkdef}
f(n,k) = \sum_{m=1}^{n}\binom{k}{m}\binom{n-1}{m-1}2^m,
\end{equation}
so that $F_{n,kl} = f(n,2kl)$, introduce the ordinary generating function
\begin{equation}\label{Gkdef}
G_k(x) = \sum_{n=1}^{\infty} f(n,k)\,x^n.
\end{equation}
Swapping summation order and using
\[
\sum_{n=m}^{\infty}\binom{n-1}{m-1}x^n = \frac{x^m}{(1-x)^m}
\]
gives
\[
G_k(x) = \sum_{m=1}^{k}\binom{k}{m}\left(\frac{2x}{1-x}\right)^m
= \left(\frac{1+x}{1-x}\right)^k - 1.
\]
Setting
\[
Y = G_k(x)+1 = \left(\frac{1+x}{1-x}\right)^k,
\]
differentiation gives the ODE $(1-x^2)Y' = 2kY$. Extracting the coefficient of $x^{n-1}$
yields
\begin{equation}\label{frecurrence}
n\,f(n,k) = 2k\,f(n-1,k) + (n-2)\,f(n-2,k), \qquad n\ge 2,\quad f(1,k)=2k.
\end{equation}
Substituting $k\mapsto 2kl$ gives the recurrence for the coefficients of $F_n(s,t)$:
\begin{equation}\label{recurrencekl}
n\,F_{n,kl} = 4kl\,F_{n-1,kl} + (n-2)\,F_{n-2,kl}.
\end{equation}
Setting
\[
u = \frac{4s}{(1+s)^2}, \qquad v = \frac{4t}{(1+t)^2},
\]
summing \eqref{recurrencekl} over all $k,l$ and changing variables via
\begin{equation}\label{relations}
\frac{\partial}{\partial t}\!\left(\frac{1+t}{1-t}\right)^{\!n}
= \frac{2n(1+t)^{n-1}}{(1-t)^{n+1}},\quad
\frac{\partial}{\partial t}\frac{4t}{(1+t)^2}=\frac{4(1-t)}{(1+t)^3},\quad
1-\frac{4t}{(1+t)^2}=\frac{(1-t)^2}{(1+t)^2},
\end{equation}
and expressing the result in terms of $W_n(u,v)$ gives
\begin{align}\label{PDEWn}
nW_n =\;& n^2 uv\, W_{n-1}
+ 2n\!\left(uv(1-u)\,\partial_u W_{n-1}
           + uv(1-v)\,\partial_v W_{n-1}\right) \notag\\
&+ 4uv(1-u)(1-v)\,\partial_{uv}W_{n-1}
+ (n-2)(1-u)(1-v)\,W_{n-2}.
\end{align}
Extracting the coefficient of $u^a v^b$ on both sides of \eqref{PDEWn},
writing $W_n(u,v) = \sum_{a,b} w_n(a,b)\,u^a v^b$, gives the \emph{bivariate peak recurrence}:
\begin{align}
n\,w_n(a,b) &= (n-2a+2)(n-2b+2)\,w_{n-1}(a-1,b-1) \notag\\
&\quad + 4ab\,w_{n-1}(a,b) \notag\\
&\quad + 2a(n-2b+2)\,w_{n-1}(a,b-1) \label{eq:wnrec}\\
&\quad + 2b(n-2a+2)\,w_{n-1}(a-1,b) \notag\\
&\quad + (n-2)\bigl[w_{n-2}(a,b) - w_{n-2}(a-1,b)
         - w_{n-2}(a,b-1) + w_{n-2}(a-1,b-1)\bigr].\notag
\end{align}

\begin{remark}
The order of the recurrence \eqref{recurrencekl} reflects the dependence
between $\mathrm{pk}(\pi)$ and $\mathrm{pk}(\pi^{-1})$. If instead one
considers the product $W_n(s,1)\cdot W_n(1,t)$, corresponding to
independent peak statistics, its coefficients satisfy the first-order recurrence
\[
P(n,k) = (2k+2)\,P(n-1,k) + (n-2k)\,P(n-1,k-1),
\]
see \cite[Section~3]{WS96} or \cite[Remark~4.8]{Stem97}.
The second-order term $(n-2)F_{n-2,kl}$ in \eqref{recurrencekl}
thus reflects the dependence between a permutation and its
inverse. The bivariate peak recurrence
\eqref{eq:wnrec} is the starting point for the probabilistic analysis
of Section~\ref{sec:martingale}.
\end{remark}

\subsection{Instability of $W_n(s,t)$}

A polynomial $f(s,t)$ with nonnegative coefficients is \emph{stable} if
$f(s,t)\neq 0$ whenever $\mathrm{Im}(s)>0$ and $\mathrm{Im}(t)>0.$
The univariate polynomial $W_n(t) = W_n(t,1)$ is real-rooted
\cite{WS96}, hence stable in one variable. The bivariate polynomial $W_n(s,t)$ is
not stable.

\begin{proposition}\label{prop:unstable}
$W_3(s,t)$ is not stable.
\end{proposition}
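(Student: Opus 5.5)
The plan is to compute $W_3(s,t)$ explicitly from the definition \eqref{Wndef} and then exhibit a zero with both coordinates in the open upper half-plane.

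\textbf{Step 1: tabulate $S_3$.} For each $\pi\in S_3$ I will record the pair $(\mathrm{pk}(\pi)+1,\ \mathrm{pk}(\pi^{-1})+1)$.
\begin{itemize}
\item The only permutations with a peak are $132$ and $231$.
\item The involutions are $123$, $132$, $213$ and $321$, and $231^{-1}=312$.
\item This gives the pairs
\[
123,\,213,\,321\mapsto(1,1),\qquad 132\mapsto(2,2),\qquad 231\mapsto(2,1),\qquad 312\mapsto(1,2).
\]
\end{itemize}
Hence
\[
W_3(s,t)=3st+s^2t+st^2+s^2t^2=st\,(3+s+t+st).
\]
As a sanity check, $W_3(t,1)=4t+2t^2$, which is the univariate peak polynomial.

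\textbf{Step 2: locate the zeros in the upper half-plane.} The factor $st$ does not vanish when $\mathrm{Im}(s),\mathrm{Im}(t)>0$, so it suffices to find a zero of $3+s+t+st$ there. This factor is linear in $t$, and solving gives
\[
t=-\frac{3+s}{1+s}=-1-\frac{2}{1+s}.
\]
Its imaginary part is
\[
\mathrm{Im}(t)=\frac{2\,\mathrm{Im}(s)}{|1+s|^2}.
\]
This is positive whenever $\mathrm{Im}(s)>0$. So every $s$ in the upper half-plane has a partner $t$ in the upper half-plane with $W_3(s,t)=0$.

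\textbf{Step 3: a concrete witness.} For definiteness, take $s=i$, which gives $t=-2+i$. Then
\[
3+i+(-2+i)+i(-2+i)=1+2i-2i-1=0.
\]
This contradicts stability and proves the proposition.

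There is no real obstacle here beyond the permutation bookkeeping in Step 1. The conceptual point is that the factor $1+s+t+st-(-2)$ carries the "wrong sign" relative to a real-stable bilinear form: $a+bs+ct+dst$ is stable iff $ad-bc\le 0$ with real coefficients, and here $ad-bc=3\cdot1-1\cdot1=2>0$.
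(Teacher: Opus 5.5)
Your proof is correct. The tabulation of $S_3$ gives $W_3(s,t)=st\,(3+s+t+st)$, consistent with the paper's diagonal $W_3(s,s)=s^2(s^2+2s+3)$, and $(s,t)=(i,-2+i)$ is indeed a zero with both imaginary parts positive.

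The paper argues slightly differently. It computes only the diagonal $W_3(s,s)$ and uses a cited lemma of Wagner: the diagonal specialization $f(s,s)$ of a stable polynomial is real-rooted. Since $s^2+2s+3$ has negative discriminant, stability fails.

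Your route needs the full bivariate polynomial rather than just its diagonal. In exchange it is self-contained, with no appeal to the diagonal lemma, and it gives an explicit witness. It also proves more: every $s$ in the upper half-plane has a partner $t=-1-2/(1+s)$ in the upper half-plane with $W_3(s,t)=0$. Your closing remark, using the criterion $ad-bc\le 0$ for real bilinear forms, explains structurally why the factor $3+s+t+st$ fails.

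The two arguments meet at one point. The fixed points of your map $s\mapsto -(3+s)/(1+s)$ are exactly the roots of $s^2+2s+3$. So the paper's non-real diagonal root $s=t=-1+i\sqrt{2}$ lies on your zero curve.
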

\begin{proof}
If $f(s,t)$ is stable then $f(s,s)$ is real-rooted \cite[Lemma~2.4]{W11}.
From the definition,
\[
W_3(s,s) = s^2(s^2+2s+3).
\]
The factor $s^2+2s+3$ has discriminant $4-12=-8<0$, so $W_3(s,s)$
is not real-rooted, hence $W_3(s,t)$ is not stable.
\end{proof}

\begin{remark}
The product $W_n(s,1)\cdot W_n(1,t)$ is stable by
\cite[§10.13]{S00}, as a product of two real-rooted
univariate polynomials. That $W_n(s,t)$ itself fails
to be stable is therefore a consequence of the dependence between
$\mathrm{pk}(\pi)$ and $\mathrm{pk}(\pi^{-1})$, encoded in the
second-order term of \eqref{recurrencekl}.
\end{remark}

\section{The martingale decomposition}\label{sec:martingale}

Throughout this section $n\ge 3$ is fixed and $i=1,\ldots,n$ is the step
index. Let $\nu_n(a,b):=w_n(a,b)/n!$ denote the joint law of
$(M_n,M'_n)$, and write $\mathcal{F}_i=\sigma\bigl((M_1,M'_1),\dots,(M_i,M'_i)\bigr)$
and $\mathbf{E}_{i-1}[\,\cdot\,]=\mathbf{E}[\,\cdot\mid\mathcal{F}_{i-1}]$.
Define the scaled processes
\begin{equation}\label{Zdef}
  Z_i := i(i-1)\!\left(M_i-\frac{i+1}{3}\right), \qquad
  Z'_i := i(i-1)\!\left(M'_i-\frac{i+1}{3}\right),
\end{equation}
with $s_n:=n(n-1)\sigma_n$, so that at $i=n$ one has
$Z_n/s_n=(M_n-\mu_n)/\sigma_n$. The goal of this section is to establish the decomposition
\begin{equation}\label{Zdecomp}
  Z_n = \sum_{i=1}^n X_i + \sum_{i=1}^n i(i-1)\,Y_i,
  \qquad
  Z'_n = \sum_{i=1}^n X'_i + \sum_{i=1}^n i(i-1)\,Y'_i,
\end{equation}
where $\{X_i\}$ and $\{X'_i\}$ are martingale difference sequences with
respect to $\{\mathcal{F}_i\}$, and $Y_i, Y'_i \in \{-1,0,1\}$ are
second-order correction terms satisfying
$\mathbf{P}(Y_i\neq 0), \mathbf{P}(Y'_i\neq 0) = O(i^{-2})$.
The central limit theorem then follows by showing the correction sums are negligible
and the martingale-difference sums are jointly asymptotically normal.

\subsection{The principal term}\label{subsec:kernel}

Dividing \eqref{eq:wnrec} by $i\cdot i!$, the $w_{i-1}$-terms yield a
transition operator $\Lambda_i$ and the $w_{i-2}$-term yields a correction:
\begin{equation}\label{eq:precrec}
  \nu_i(a,b) = \Lambda_i\nu_{i-1}(a,b)
  + \frac{i-2}{i^2(i-1)}\,\Delta^{2}_i(a,b),
\end{equation}
where
\begin{multline}\label{eq:kernel}
  \Lambda_i\nu_{i-1}(a,b) := \frac{1}{i^2}\bigl[
    (i-2a+2)(i-2b+2)\,\nu_{i-1}(a-1,b-1)
    + 2b(i-2a+2)\,\nu_{i-1}(a-1,b) \\
    + 2a(i-2b+2)\,\nu_{i-1}(a,b-1)
    + 4ab\,\nu_{i-1}(a,b)\bigr],
\end{multline}
and
\begin{equation}\label{eq:Delta2}
  \Delta^{2}_i(a,b) :=
  \nu_{i-2}(a,b)-\nu_{i-2}(a-1,b)-\nu_{i-2}(a,b-1)+\nu_{i-2}(a-1,b-1).
\end{equation}
The weights in $\Lambda_i$ factor in the two coordinates: each term
in \eqref{eq:kernel} is a product of a factor depending only on $a$
and one depending only on $b$. Consequently, conditionally on
$\mathcal{F}_{i-1}$ the increments $U_i, U'_i\in\{0,1\}$ are
independent, each governed by
\begin{equation}\label{eq:Bmove}
  \mathbf{P}(U_i=1\mid\mathcal{F}_{i-1})=\frac{i-2M_{i-1}}{i},
  \qquad
  \mathbf{P}(U_i=0\mid\mathcal{F}_{i-1})=\frac{2M_{i-1}}{i},
\end{equation}
and likewise for $U'_i$. The first-order contributes the increment
\begin{equation}\label{Xdef}
  X_i :=
  \begin{cases}
    2(i-1)\!\left(M_{i-1}-\dfrac{i+1}{3}\right) - \dfrac{i(i-1)}{3}, & \text{prob.\ }
      \dfrac{2M_{i-1}}{i},\\[8pt]
    2(i-1)\!\left(M_{i-1}-\dfrac{i+1}{3}\right) + \dfrac{2i(i-1)}{3}, & \text{prob.\ }
      \dfrac{i-2M_{i-1}}{i},
  \end{cases}
\end{equation}
to $Z_i - Z_{i-1}$, satisfying $\mathbf{E}_{i-1}[X_i]=0$, and analogously
$X'_i$ for the primed coordinate. Hence $\{X_i\}$ and $\{X'_i\}$ are
martingale difference sequences with respect to $\{\mathcal{F}_i\}$.

\subsection{The second-order correction}\label{subsec:correction}

The transition operator $\Lambda_i$ alone does not fully reproduce $\nu_i$.
The remaining term in \eqref{eq:precrec},
\begin{equation}\label{eq:correctionterm}
  \frac{i-2}{i^2(i-1)}\,\Delta^{2}_i(a,b),
\end{equation}
is the source of dependence between $M_n$ and $M'_n$, and we will
show it is asymptotically negligible.

The correction term \eqref{eq:correctionterm} has the following interpretation as a mass transfer on $\mathbb{Z}^2$.
For any $(a,b)$, the term $\nu_{i-2}(a,b)$ appears
positively in $\Delta^2_i(a,b)$ and $\Delta^2_i(a+1,b+1)$, and
negatively in $\Delta^2_i(a+1,b)$ and $\Delta^2_i(a,b+1)$. This
suggests transferring mass $\nu_{i-2}(a,b)/2$ from $(a+1,b)$ and
$(a,b+1)$ toward $(a,b)$ and $(a+1,b+1)$. Applying this to every
$(a,b)$ simultaneously and reading the result as transition
probabilities from the state after applying $\Lambda_i$, we define
$(Y_i,Y'_i)\in\{-1,0,1\}^2$ by
\begin{equation}\label{Ydef}
(Y_{i},Y_{i}') =
\begin{cases}
(-1,\;0), & \text{prob.\ }
  \dfrac{i-2}{2i^2(i-1)}\,\nu_{i-2}(a-1,b), \\[8pt]
(+1,\;0),  & \text{prob.\ }
  \dfrac{i-2}{2i^2(i-1)}\,\nu_{i-2}(a,b-1), \\[8pt]
(0,\;-1), & \text{prob.\ }
  \dfrac{i-2}{2i^2(i-1)}\,\nu_{i-2}(a,b-1), \\[8pt]
(0,\;+1),  & \text{prob.\ }
  \dfrac{i-2}{2i^2(i-1)}\,\nu_{i-2}(a-1,b), \\[8pt]
(0,\;0),  & \text{otherwise,}
\end{cases}
\end{equation}
where $(a,b)$ denotes the value of $(M_i,M'_i)$ after applying $\Lambda_i$.

\begin{lemma}\label{lem:correction}
For every $i\ge 3$,
\[
\mathbf{P}(Y_i\neq 0)\le\frac{2(i-2)}{i^2(i-1)}\le\frac{2}{i^2},
\]
and the same bound holds for $\mathbf{P}(Y'_i\neq 0)$.
\end{lemma}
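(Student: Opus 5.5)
The bound follows by averaging the conditional probabilities in \eqref{Ydef} over the law of the state after applying $\Lambda_i$, and then using only that $\nu_{i-2}$ is a probability distribution.

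Conditionally on the state $(a,b)$ produced by $\Lambda_i$, the event $\{Y_i\neq 0\}$ is the union of the two cases $(Y_i,Y'_i)=(-1,0)$ and $(+1,0)$. By \eqref{Ydef} its conditional probability is
\[
\frac{i-2}{2i^2(i-1)}\bigl(\nu_{i-2}(a-1,b)+\nu_{i-2}(a,b-1)\bigr).
\]
Since $\nu_{i-2}$ is a probability mass function on $\mathbb{Z}^2$, each value satisfies $\nu_{i-2}(\cdot,\cdot)\le 1$. Hence the conditional probability is at most $\frac{i-2}{i^2(i-1)}$ for every state $(a,b)$.

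Taking expectation over $(a,b)$, which is distributed according to $\Lambda_i\nu_{i-1}$, gives
\[
\mathbf{P}(Y_i\neq 0)\le\frac{i-2}{i^2(i-1)}\le\frac{2(i-2)}{i^2(i-1)}.
\]
This already yields the stated bound with a factor of $2$ to spare. Alternatively, one can sum over $(a,b)$ weighted by $\Lambda_i\nu_{i-1}(a,b)\le 1$ and use $\sum_{a,b}\nu_{i-2}(a-1,b)=1$, and likewise for the shift in $b$. This route also gives at most $\frac{2(i-2)}{2i^2(i-1)}$, which I expect is how the constant $2$ in the statement arises.

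For the last inequality, note that $\frac{i-2}{i-1}<1$, so $\frac{2(i-2)}{i^2(i-1)}\le \frac{2}{i^2}$.

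The primed case is identical. The event $\{Y'_i\neq 0\}$ corresponds to the cases $(0,-1)$ and $(0,+1)$, whose weights are again $\nu_{i-2}(a,b-1)$ and $\nu_{i-2}(a-1,b)$, so the same argument applies verbatim.

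The one point needing care is that the weights in \eqref{Ydef} are legitimate sub-probabilities: for each $(a,b)$ the four listed weights must sum to at most $1$, so that the case ``$(0,0)$, otherwise'' receives nonnegative mass. The four weights total at most $\frac{2(i-2)}{i^2(i-1)}\cdot\max\nu_{i-2}\le \frac{2}{9}<1$ for $i\ge 3$, so this holds. Apart from this check, the argument is routine.
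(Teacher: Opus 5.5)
Your argument is correct and is essentially the paper's proof: you sum the nonzero weights in \eqref{Ydef} and use only that $\nu_{i-2}$ is a probability measure. You get the sharper bound $\frac{i-2}{i^2(i-1)}$ because you sum only the two cases with $Y_i\neq 0$; the paper's factor $2$ comes from summing all four nonzero cases, i.e.\ from bounding $\mathbf{P}\bigl((Y_i,Y'_i)\neq(0,0)\bigr)$, not from your alternative route.
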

\begin{proof}
The bound follows from summing the non-zero probabilities in
\eqref{Ydef} and using the fact that $\nu_{i-2}$ is a probability measure.
\end{proof}

\begin{proposition}\label{prop:law}
For every $i=1,\ldots,n$, the pair $(M_i,M'_i)$ has joint law $\nu_i$.
In particular $M_n=\mathrm{pk}(\pi)+1$ and
$M'_n=\mathrm{pk}(\pi^{-1})+1$ for $\pi$ uniform on $S_n$.
\end{proposition}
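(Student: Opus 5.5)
Proposition~\ref{prop:law} will be proved by induction on $i$, showing that the two-stage transition (first $\Lambda_i$, then the correction move $(Y_i,Y'_i)$ of \eqref{Ydef}) takes the law $\nu_{i-1}$ of $(M_{i-1},M'_{i-1})$ to $\nu_i$, given that $(M_{i-2},M'_{i-2})\sim\nu_{i-2}$. The base cases $i=1,2$ are checked directly from \eqref{Wndef}. For $i=1$, $W_1=st$, so $\nu_1=\delta_{(1,1)}$. For $i=2$, $W_2=2st$, and the recurrence \eqref{eq:wnrec} at $n=2$ has vanishing $(n-2)$-term; hence only $\Lambda_2$ acts, and it must send $\delta_{(1,1)}$ to $\delta_{(1,1)}$, which holds because $\mathbf{P}(U_2=1)=(2-2)/2=0$.

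For the inductive step, assume $(M_{i-1},M'_{i-1})\sim\nu_{i-1}$ and $(M_{i-2},M'_{i-2})\sim\nu_{i-2}$. The intermediate state is $(\tilde a,\tilde b)=(M_{i-1}+U_i,\,M'_{i-1}+U'_i)$, with $U_i,U'_i$ conditionally independent as in \eqref{eq:Bmove}. Its law is $\Lambda_i\nu_{i-1}$, by matching the four cases $(U_i,U'_i)\in\{0,1\}^2$ against the four terms of \eqref{eq:kernel}. For example, the term $(i-2a+2)(i-2b+2)\nu_{i-1}(a-1,b-1)/i^2$ equals $\mathbf{P}(U_i=U'_i=1\mid M_{i-1}=a-1,M'_{i-1}=b-1)\,\nu_{i-1}(a-1,b-1)$.

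Next I must show that applying the correction moves \eqref{Ydef} to $\Lambda_i\nu_{i-1}$ adds exactly $\frac{i-2}{i^2(i-1)}\Delta^2_i$; then \eqref{eq:precrec} gives $\nu_i$. Write $c_i=\frac{i-2}{2i^2(i-1)}$ and compute the net flow into a site $(a,b)$:
\begin{itemize}
\item Outflow from $(a,b)$ totals $2c_i[\nu_{i-2}(a-1,b)+\nu_{i-2}(a,b-1)]$.
\item Inflow arrives from $(a+1,b)$ via $(-1,0)$ with weight $c_i\nu_{i-2}(a,b)$, and from $(a-1,b)$ via $(+1,0)$ with weight $c_i\nu_{i-2}(a-1,b-1)$.
\item Symmetric inflows come from $(a,b\pm1)$.
\end{itemize}
The net change is therefore $2c_i\Delta^2_i(a,b)$, as required.

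I expect the main obstacle to be well-definedness rather than the algebra. The rule \eqref{Ydef} specifies probabilities that depend on the target site and on $\nu_{i-2}$, not on the current state of the process. So I must argue that the total outflow probability at each state is at most $1$, which is Lemma~\ref{lem:correction}-type bounds. I must also ensure that the moves are realizable as a transition conditional on $\mathcal F_i$ given the intermediate state; the natural interpretation is that the correction is a Markov kernel acting on the intermediate state with these site-dependent rates. Finally, the process must never exit the support, i.e.\ mass is not pushed to sites with $2a>i+1$. This support question should follow because the resulting measure is $\nu_i$, which is nonnegative and supported correctly. More carefully, I would first try to check that the rates vanish whenever the source site lies outside $\mathrm{supp}\,\Lambda_i\nu_{i-1}$. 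Failing that, I would prove only the statement about laws: the pushforward measure equals $\nu_i$ as signed measures, and nonnegativity is automatic. The ``in particular'' clause then follows at $i=n$ from $\nu_n=w_n/n!$ and \eqref{Wndef}.
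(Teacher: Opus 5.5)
You follow the paper's route: induction, the $\Lambda_i$ step yields $\Lambda_i\nu_{i-1}$, and the moves \eqref{Ydef} then add $2c_i\Delta^2_i$. Your in/out-flow count is exactly the paper's ``direct inspection''.

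\textbf{The gap.} That count is valid only if the numbers in \eqref{Ydef} are \emph{joint} masses, $\mathbf{P}\bigl(\text{intermediate state}=(a,b),\ (Y_i,Y'_i)=(-1,0)\bigr)=c_i\nu_{i-2}(a-1,b)$, and so on. The interpretation you then adopt contradicts this. Under it, the correction is a Markov kernel that moves from the intermediate state $(a,b)$ with these \emph{conditional} probabilities. Then the inflow from $(a+1,b)$ would be $\Lambda_i\nu_{i-1}(a+1,b)\,c_i\nu_{i-2}(a,b)$, not $c_i\nu_{i-2}(a,b)$, and the identity fails.

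Concretely, at $i=3$ the measure $\Lambda_3\nu_2$ puts mass $4/9,2/9,2/9,1/9$ on $(1,1),(2,1),(1,2),(2,2)$, and $c_3=1/36$. The kernel reading gives $\mathbf{P}(M_3=M'_3=1)=4/9+1/81$, while the true value is $1/2=4/9+1/18$.

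Two consequences follow. The well-definedness check you propose (per-state outflow probability at most $1$, via Lemma~\ref{lem:correction}-type bounds) is the wrong condition. Your fallback, equality as signed measures, is only \eqref{eq:precrec} restated. It constructs no random variables obeying \eqref{onestep-exact}, and that is what \eqref{Zdecomp} and the rest of the section use.

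\textbf{What is needed.} Under the joint reading, the correction kernel from $(a,b)$ assigns $c_i\nu_{i-2}(a-1,b)/\Lambda_i\nu_{i-1}(a,b)$ to each of $(-1,0),(0,+1)$. It assigns $c_i\nu_{i-2}(a,b-1)/\Lambda_i\nu_{i-1}(a,b)$ to each of $(+1,0),(0,-1)$. This is a genuine probability kernel if and only if $\Lambda_i\nu_{i-1}(a,b)\ge 2c_i\bigl[\nu_{i-2}(a-1,b)+\nu_{i-2}(a,b-1)\bigr]$ for all $(a,b)$. By \eqref{eq:precrec}, this is equivalent to $\nu_i(a,b)\ge 2c_i\bigl[\nu_{i-2}(a,b)+\nu_{i-2}(a-1,b-1)\bigr]$.

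That inequality holds because $w_i(a,b)\ge w_{i-2}(a,b)+w_{i-2}(a-1,b-1)$. To see this, append $i-1,i$ (resp.\ $i,i-1$) to the one-line word of $\sigma\in S_{i-2}$. This operation commutes with inversion, keeps (resp.\ raises by one) the peak numbers of both $\sigma$ and $\sigma^{-1}$, and the two images are disjoint. Hence $\nu_i(a,b)\ge\frac{1}{i(i-1)}\bigl[\nu_{i-2}(a,b)+\nu_{i-2}(a-1,b-1)\bigr]$, and the claim follows since $2c_i=\frac{i-2}{i}\cdot\frac{1}{i(i-1)}$.

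With this inequality, and the convention $0/0:=0$, your flow computation closes the induction. The paper's own proof also passes over this point, so it is a step to add rather than a departure from its route.
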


\begin{proof}
We proceed by induction on $i$, the cases $i=1,2$ being immediate. Assume $(M_{i-1},M'_{i-1})$
has law $\nu_{i-1}$ and $(M_{i-2},M'_{i-2})$ has law $\nu_{i-2}$.

We compute $\mathbf{P}(M_i=a, M'_i=b)$ in two parts. For the first-order term, the only states that can transition to $(a,b)$ under $\Lambda_i$
are $(a-1,b-1)$, $(a-1,b)$, $(a,b-1)$, and $(a,b)$ itself. Weighting
each by its transition probability from \eqref{eq:kernel} and by the
inductive hypothesis that $(M_{i-1},M'_{i-1})$ has law $\nu_{i-1}$,
the total mass assigned to $(a,b)$ after the first-order step is
$\Lambda_i\nu_{i-1}(a,b)$. The correction step then adds \eqref{eq:correctionterm} to $(a,b)$, as verified by direct inspection of \eqref{eq:Delta2} and \eqref{Ydef}. Applying
\eqref{eq:precrec} gives
\[
\mathbf{P}(M_i=a,M'_i=b)
= \Lambda_i\nu_{i-1}(a,b) + \frac{i-2}{i^2(i-1)}\Delta^2_i(a,b)
= \nu_i(a,b). \qedhere
\]
\end{proof}

The exact one-step relation is therefore
\begin{equation}\label{onestep-exact}
M_{i} = M_{i-1} + U_i + Y_i,\qquad M'_{i} = M'_{i-1} + U'_i + Y'_i,
\end{equation}
and telescoping from $i=1$ to $n$ yields \eqref{Zdecomp}.

\subsection{The joint central limit theorem}

Recall the decomposition \eqref{Zdecomp}, where $\{X_i\}$ and $\{X'_i\}$
are martingale difference sequences and $Y_i$, $Y'_i$ are second-order
corrections. We show in Steps 1-2 below that the corrections are
negligible and the martingale part is asymptotically normal; Slutsky's theorem
then gives the result.

The Gaussian limit follows from the multivariate martingale central limit
theorem of Helland \cite[Thm.~3.3]{He82}, which we state in the
form needed here.

\begin{theorem}[\cite{He82}]\label{thm:helland}
Let $\{(\xi_{n,i},\xi'_{n,i})\}_{i\le n}$ be a square-integrable
martingale-difference array with respect to $\{\mathcal{F}_i\}$. Suppose,
as $n\to\infty$,
\begin{align}
\sum_{i=1}^n\mathbf{E}_{i-1}[\xi_{n,i}^2]\xrightarrow{p}1,
\qquad
\sum_{i=1}^n\mathbf{E}_{i-1}[\xi_{n,i}'^2]\xrightarrow{p}1, \label{Hvar}\\
\sum_{i=1}^n\mathbf{E}_{i-1}[\xi_{n,i}\,\xi'_{n,i}]\xrightarrow{p}0, \label{Hcross}\\
\sum_{i=1}^n\mathbf{E}_{i-1}\!\bigl[(\xi_{n,i}^2+\xi_{n,i}'^2)\,
\mathbf{1}_{\{|\xi_{n,i}|+|\xi'_{n,i}|>\varepsilon\}}\bigr]\xrightarrow{p}0
\quad\text{for all }\varepsilon>0. \label{Hlind}
\end{align}
Then $\bigl(\sum_i\xi_{n,i},\sum_i\xi'_{n,i}\bigr)\Rightarrow N(0,I_2)$.
\end{theorem}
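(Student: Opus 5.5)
Since the statement is Helland's multivariate martingale central limit theorem, cited from \cite{He82}, the natural plan is to reduce it to the univariate martingale CLT through the Cram\'er--Wold device. Fix $(\alpha,\beta)\in\mathbb{R}^2\setminus\{0\}$ and define the scalar array
\[
\eta_{n,i}:=\alpha\,\xi_{n,i}+\beta\,\xi'_{n,i}.
\]
This is again a square-integrable martingale-difference array with respect to the same filtration $\{\mathcal{F}_i\}$. It therefore suffices to show that $\sum_i\eta_{n,i}\Rightarrow N(0,\alpha^2+\beta^2)$ for every such $(\alpha,\beta)$. Indeed, this gives convergence of $\mathbf{E}\exp\bigl(\mathrm{i}(\alpha S_n+\beta S'_n)\bigr)$ to $\exp\bigl(-(\alpha^2+\beta^2)/2\bigr)$, the characteristic function of $N(0,I_2)$, where $S_n=\sum_i\xi_{n,i}$ and $S'_n=\sum_i\xi'_{n,i}$.

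The first step checks the conditional-variance condition for $\eta$. Expanding the square,
\[
\sum_i\mathbf{E}_{i-1}[\eta_{n,i}^2]=\alpha^2\sum_i\mathbf{E}_{i-1}[\xi_{n,i}^2]+2\alpha\beta\sum_i\mathbf{E}_{i-1}[\xi_{n,i}\xi'_{n,i}]+\beta^2\sum_i\mathbf{E}_{i-1}[\xi_{n,i}'^2].
\]
By \eqref{Hvar} and \eqref{Hcross} this converges in probability to $\alpha^2+\beta^2$.

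The second step checks the conditional Lindeberg condition. Put $c=\max(|\alpha|,|\beta|)$. Then $|\eta_{n,i}|>\varepsilon$ forces $|\xi_{n,i}|+|\xi'_{n,i}|>\varepsilon/c$, and $\eta_{n,i}^2\le 2(\alpha^2+\beta^2)(\xi_{n,i}^2+\xi_{n,i}'^2)$. Hence
\[
\sum_i\mathbf{E}_{i-1}\bigl[\eta_{n,i}^2\mathbf{1}_{\{|\eta_{n,i}|>\varepsilon\}}\bigr]\le 2(\alpha^2+\beta^2)\sum_i\mathbf{E}_{i-1}\bigl[(\xi_{n,i}^2+\xi_{n,i}'^2)\mathbf{1}_{\{|\xi_{n,i}|+|\xi'_{n,i}|>\varepsilon/c\}}\bigr],
\]
which tends to $0$ in probability by \eqref{Hlind} applied with $\varepsilon/c$.

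The final step, and the only substantive one, is the univariate martingale CLT under conditional variance and conditional Lindeberg hypotheses (Brown; Hall--Heyde, Cor.~3.1). Its nesting requirement holds trivially here, since the $\sigma$-fields $\mathcal{F}_i$ do not depend on $n$. This result would be quoted rather than reproved.

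If a self-contained argument were needed, I would proceed in three moves:
\begin{itemize}
\item[(a)] Stop the array at the first index $\tau_n$ where the partial conditional variance exceeds $\alpha^2+\beta^2+1$. This makes the total conditional variance bounded while changing $\sum_i\eta_{n,i}$ only on an event of vanishing probability.
\item[(b)] Truncate the summands at level $\varepsilon$ and recentre them conditionally; the Lindeberg bound makes this perturbation negligible.
\item[(c)] Show, via a second-order Taylor bound for $e^{\mathrm{i}x}$, that $\prod_i\mathbf{E}_{i-1}\bigl[e^{\mathrm{i}u\eta_{n,i}}\bigr]$ is close to $\exp\bigl(-\tfrac{u^2}{2}\sum_i\mathbf{E}_{i-1}[\eta_{n,i}^2]\bigr)$. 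The bounded-variance stopping justifies dominated convergence when passing from the product of conditional characteristic functions to $\mathbf{E}\,e^{\mathrm{i}u\sum_i\eta_{n,i}}$.
\end{itemize}
Controlling this last replacement is the main technical obstacle.
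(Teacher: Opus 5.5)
The paper gives no proof of this statement. It is imported as a black box from Helland, and the paper's only work is checking its three hypotheses in the proof of Lemma~\ref{lem:cleanCLT}.

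Your argument is correct and supplies the missing derivation by the standard route:
\begin{itemize}
\item Cram\'er--Wold reduces the claim to the scalar arrays $\eta_{n,i}=\alpha\xi_{n,i}+\beta\xi'_{n,i}$.
\item The cross-term hypothesis is exactly what removes the $2\alpha\beta$ term, so the conditional variance of $\sum_i\eta_{n,i}$ tends to $\alpha^2+\beta^2$.
\item Your bounds $|\eta_{n,i}|\le c\,(|\xi_{n,i}|+|\xi'_{n,i}|)$ and $\eta_{n,i}^2\le(\alpha^2+\beta^2)(\xi_{n,i}^2+\xi_{n,i}'^2)$ show that the $\ell^1$-indicator form of the bivariate Lindeberg hypothesis implies the scalar conditional Lindeberg condition at level $\varepsilon/c$.
\item The scalar conditional-Lindeberg martingale CLT (Hall--Heyde, Cor.~3.1) then applies.
\end{itemize}
Your remark on nesting is right. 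Nesting is in any case dispensable when the limiting variance is a constant.

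Compared with the bare citation, your route shows explicitly how each hypothesis is used. It also does not depend on the exact form of Helland's Theorem~3.3, in particular on his version of the vector Lindeberg condition. It still rests on the univariate theorem, however.

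Your outline (a)--(c) of that univariate theorem is the usual argument: stopping, truncation, and products of conditional characteristic functions. The point to secure is that the truncated increments and the stopped conditional variance together keep $\prod_i\mathbf{E}_{i-1}\bigl[e^{\mathrm{i}u\eta_{n,i}}\bigr]$ bounded away from zero. That lets you divide it out before applying dominated convergence.
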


\subsubsection*{Step 1: The corrections are negligible}

\begin{lemma}\label{lem:negligible}
$\frac{1}{s_n}\sum_{i=1}^n i(i-1)\,Y_i\to0$ almost surely,
and likewise with $Y'_i$.
\end{lemma}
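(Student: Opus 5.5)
The plan is a Borel--Cantelli argument built on Lemma~\ref{lem:correction}. That lemma gives $\mathbf{P}(Y_i\neq 0)\le 2/i^2$, and $\sum_i 2/i^2<\infty$. By the first Borel--Cantelli lemma, almost surely only finitely many $Y_i$ are nonzero. This needs no independence: we only sum the marginal probabilities. The same bound holds for $Y'_i$, so the primed coordinate is handled identically.

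One technical point is that the construction in Section~\ref{sec:martingale} fixes $n$ and runs $i=1,\dots,n$. To apply Borel--Cantelli, I will regard the steps as a single infinite sequence $\{(M_i,M'_i,Y_i,Y'_i)\}_{i\ge1}$ on one probability space. The recursive definition \eqref{onestep-exact} extends step by step, and the law of $Y_i$ does not depend on $n$. We then get a random index $N_0<\infty$ almost surely with $Y_i=0$ for all $i>N_0$. For every $n\ge N_0$ this gives
\[
\Bigl|\sum_{i=1}^n i(i-1)Y_i\Bigr| \le \sum_{i=1}^{N_0} i(i-1) \le N_0^3 =: C(\omega),
\]
where $C(\omega)<\infty$ almost surely and does not depend on $n$.

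It remains to divide by $s_n=n(n-1)\sigma_n$. Since $\sigma_n^2=2(n+1)/45$, we have $s_n\asymp n^{5/2}\to\infty$. Hence $C(\omega)/s_n\to 0$ almost surely, which proves the claim.

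The main obstacle is the coupling issue above, namely making ``almost surely'' meaningful when the array is indexed by $n$. If embedding everything in a single sequence is not acceptable, I would fall back on convergence in probability, which is all Slutsky's theorem needs later. Pick a fixed $K$. The indices $i>K$ contribute nonzero terms with probability at most $\sum_{i>K}2/i^2\le 2/K$. The terms with $i\le K$ are bounded by $K^3$, which is $o(s_n)$. Letting $n\to\infty$ and then $K\to\infty$ gives $\frac{1}{s_n}\sum_i i(i-1)Y_i\to 0$ in probability.

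As a further alternative, one can bound the first moment directly:
\[
\mathbf{E}\Bigl|\sum_i i(i-1)Y_i\Bigr|\le\sum_{i\le n} i^2\cdot \frac{2}{i^2}=2n=o(s_n).
\]
This also yields convergence in probability, and hence the statement, without any coupling.
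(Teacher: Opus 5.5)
Your main argument is the paper's: Lemma~\ref{lem:correction} and the first Borel--Cantelli lemma give $Y_i=0$ for all large $i$ almost surely, so the sum is eventually a finite random constant, and dividing by $s_n\to\infty$ finishes; your remark about realising all steps as one infinite sequence only makes explicit what the paper's proof tacitly assumes. One small slip: the first-moment bound in your last alternative gives convergence in probability, not the almost-sure statement, though since $2n/s_n=O(n^{-3/2})$ is summable, Markov's inequality plus Borel--Cantelli would upgrade it to almost-sure convergence on any common probability space.
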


\begin{proof}
By Lemma~\ref{lem:correction}, $\sum_{i=1}^\infty\mathbf{P}(Y_i\neq 0)
\le\sum_{i=1}^\infty\frac{2}{i^2}<\infty$. The first Borel--Cantelli
lemma gives $Y_i=0$ for all $i$ sufficiently large, almost surely.
Hence $\sum_{i=1}^n i(i-1)Y_i$ is eventually a finite random constant,
and dividing by $s_n\to\infty$ gives the claim. The same argument
applies to $Y'_i$.
\end{proof}

\begin{remark}
The corrections $Y_i$ and $Y'_i$ are the only source of dependence
between $M_n$ and $M'_n$. Their rarity $\mathbf{P}(Y_i\neq0)=O(i^{-2})$
makes Borel--Cantelli applicable regardless of any correlation structure,
giving the stronger almost sure conclusion.
\end{remark}

\subsubsection*{Step 2: The martingale part is jointly asymptotically normal}

\begin{lemma}\label{lem:cleanCLT}
$\displaystyle\Bigl(\frac1{s_n}\sum_{i=1}^n X_i,\ \frac1{s_n}\sum_{i=1}^n X'_i\Bigr)\Rightarrow N(0,I_2).$
\end{lemma}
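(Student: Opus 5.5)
We apply Theorem~\ref{thm:helland} to the array $\xi_{n,i}=X_i/s_n$, $\xi'_{n,i}=X'_i/s_n$. These are martingale differences with respect to $\{\mathcal{F}_i\}$ by Section~\ref{subsec:kernel}, and they are bounded, hence square integrable. So it is enough to verify \eqref{Hvar}, \eqref{Hcross} and \eqref{Hlind}.

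\emph{The cross term \eqref{Hcross}.} By \eqref{eq:Bmove}, conditionally on $\mathcal{F}_{i-1}$ the increments $U_i$ and $U'_i$ are independent. The quantity $X_i$ is an affine function of $U_i$ whose coefficients are $\mathcal{F}_{i-1}$-measurable, and likewise $X'_i$ is affine in $U'_i$. Hence
\[
\mathbf{E}_{i-1}[X_iX'_i]=\mathbf{E}_{i-1}[X_i]\,\mathbf{E}_{i-1}[X'_i]=0,
\]
and the sum in \eqref{Hcross} vanishes identically. This is exactly where the product structure of $\Lambda_i$ is used.

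\emph{The conditional variance \eqref{Hvar}.} By \eqref{Xdef}, $X_i$ takes two values that differ by $i(i-1)$. Therefore
\[
\mathbf{E}_{i-1}[X_i^2]=i^2(i-1)^2\,p_i(1-p_i),\qquad p_i=\frac{2M_{i-1}}{i}.
\]
Since $M_{i-1}/i\to 1/3$, we have $p_i(1-p_i)\approx 2/9$, which gives $\sum_i\mathbf{E}_{i-1}[X_i^2]\asymp n^5$. This has the same order as $s_n^2=n^2(n-1)^2\cdot 2(n+1)/45$.

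To get the exact limit, I would write $p_i(1-p_i)=\tfrac29+\tfrac{2}{i}\cdot\tfrac13\bigl(M_{i-1}-\tfrac i3\bigr)\cdot c-\tfrac{4}{i^2}\bigl(M_{i-1}-\tfrac i3\bigr)^2$ for an explicit constant $c$. The linear fluctuation term is then controlled in $L^2$ using $\mathrm{Var}(M_{i-1})=O(i)$ from \cite{WS96}, together with Proposition~\ref{prop:law}, which identifies the law of $M_{i-1}$. Its contribution to the sum is $O\bigl(\sum_i i^3\cdot i^{-1/2}\bigr)=O(n^{7/2})=o(n^5)$ in $L^1$. The quadratic term is of order $\sum_i i^4\cdot i^{-1}=O(n^4)$.

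The delicate point is the constant. A naive computation gives $\sum_i i^4\cdot\tfrac29\sim\tfrac{2}{45}n^5$, while $s_n^2\sim\tfrac{2}{45}n^5$. So the leading constants should match, and I expect this to be the main obstacle: one must check that the predictable variance normalises to exactly $1$, not merely to a constant. If a residual constant appeared, it would have to be reconciled with the fact that $\mathbf{E}[Z_n^2]=s_n^2$ exactly and that the correction terms are negligible by Lemma~\ref{lem:negligible}. In that case one would instead compute $\mathbf{E}\sum_i\mathbf{E}_{i-1}[X_i^2]$ exactly from the mean and variance formulas and then show concentration.

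\emph{The Lindeberg condition \eqref{Hlind}.} We have $|X_i|\le 2(i-1)|M_{i-1}-\tfrac{i+1}{3}|+i(i-1)\le C n^2$ deterministically, since $M_{i-1}\le i$. Hence $|\xi_{n,i}|\le Cn^2/s_n=O(n^{-1/2})\to0$ uniformly in $i$. For large $n$ the indicator in \eqref{Hlind} is therefore identically zero, and the Lindeberg condition holds trivially.

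Combining these with Theorem~\ref{thm:helland} gives the lemma.
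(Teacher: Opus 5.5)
Your proposal is correct and follows the paper's proof essentially step for step: Helland's theorem, a deterministic $O(i^2)$ bound on $|X_i|$ that makes the Lindeberg indicators vanish, the expansion $p_i(1-p_i)=\tfrac29-\tfrac{2}{3i}\bigl(M_{i-1}-\tfrac i3\bigr)-\tfrac{4}{i^2}\bigl(M_{i-1}-\tfrac i3\bigr)^2$ with the fluctuation terms controlled by the Warren--Seneta moments (via Proposition~\ref{prop:law}), and the cross term vanishing exactly by conditional independence of $U_i$ and $U'_i$. Two small corrections: the linear term contributes $O\bigl(\sum_i i^3\cdot i^{1/2}\bigr)=O(n^{9/2})$ rather than $O(n^{7/2})$, which is still $o(n^5)$; and your hedging about the constant is unnecessary, since $\tfrac29\sum_i i^2(i-1)^2\sim\tfrac{2}{45}n^5\sim s_n^2$ is precisely the check that completes the argument.
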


\begin{proof}
Apply Theorem~\ref{thm:helland} to $\xi_{n,i}=X_i/s_n$ and
$\xi'_{n,i}=X'_i/s_n$. Write
\begin{equation*}
V_i := M_i - \frac{i+1}{3}
\end{equation*}
for the centered process.

\noindent \emph{Lindeberg condition \eqref{Hlind}.}
Both outcomes of $X_i$ in \eqref{Xdef} equal $2(i-1)V_{i-1}$
plus a term of size $\le\frac{2}{3}i(i-1)$. Since $\mathrm{pk}(\pi)\le\lfloor(i-1)/2\rfloor$
for $\pi\in S_{i-1}$, we have $1\le M_{i-1}\le\lfloor i/2\rfloor$,
so $|V_{i-1}|\le\frac{i}{3}$,
giving the deterministic bound $|X_i|\le C\,i^2$ uniformly. Hence
\[
\max_{1\le i\le n}\frac{|X_i|}{s_n}\le\frac{C\,n^2}{s_n}=O(n^{-1/2})\to0,
\]
and the same for $X'_i$, so the indicators in \eqref{Hlind} vanish
for all large $n$.

\noindent \emph{Conditional variance \eqref{Hvar}.}
By \eqref{Xdef}, $X_i$ takes two values
differing by $i(i-1)$, with
$\mathbf{P}(U_i=0\mid\mathcal{F}_{i-1})=2M_{i-1}/i$.
Writing $\frac{2M_{i-1}}{i}=\frac{2}{3}+\frac{2V_{i-1}}{i}$,
\[
\mathbf{E}_{i-1}[X_i^2]=\left(\frac{2}{9}
-\frac{2V_{i-1}}{3i}
-\frac{4V_{i-1}^2}{i^2}\right)i^2(i-1)^2.
\]
Summing over $i$ and dividing by $s_n^2$,
\[
\frac{1}{s_n^2}\sum_{i=1}^n\mathbf{E}_{i-1}[X_i^2]
=\frac{\frac{2}{9}\sum_i i^2(i-1)^2}{s_n^2}
-\frac{R_n^{(1)}}{s_n^2}-\frac{R_n^{(2)}}{s_n^2},
\]
where $R_n^{(1)}=\frac{2}{3}\sum_i i(i-1)^2V_{i-1}$
and $R_n^{(2)}=4\sum_i(i-1)^2V_{i-1}^2$.

\emph{Main term.} Since $\sigma_n^2=\frac{2(n+1)}{45}$ \cite{WS96} and
$s_n^2=n^2(n-1)^2\sigma_n^2$, we have $s_n^2\sim\frac{2}{45}n^5$.
Since $\sum_{i=1}^n i^2(i-1)^2\sim\frac{n^5}{5}$, the main term satisfies
\[
\frac{\frac{2}{9}\sum_i i^2(i-1)^2}{s_n^2}
\sim \frac{\frac{2}{9}\cdot\frac{n^5}{5}}{\frac{2}{45}n^5}
= \frac{2/45}{2/45} = 1.
\]

\emph{Term $R_n^{(1)}$.} Since $\mathbf{E}[V_{i-1}]=0$
we have $\mathbf{E}[R_n^{(1)}]=0$. Using
$\mathbf{E}[|V_{i-1}|]\le\sigma_{i-1}=O(\sqrt{i})$
\cite{WS96},
\[
\frac{\mathbf{E}[|R_n^{(1)}|]}{s_n^2}
\le\frac{\frac{2}{3}\sum_i i(i-1)^2\sigma_{i-1}}{s_n^2}
=O\!\left(\frac{n^{9/2}}{n^5}\right)=O(n^{-1/2})\to0,
\]
so $R_n^{(1)}/s_n^2\to0$ in $L^1$ and in probability.

\emph{Term $R_n^{(2)}$.} Since
$\mathbf{E}[V_{i-1}^2]=\sigma_{i-1}^2\sim 2i/45$ from \cite{WS96},
\[
\frac{\mathbf{E}[R_n^{(2)}]}{s_n^2}
=\frac{4\sum_i(i-1)^2\sigma_{i-1}^2}{s_n^2}
=O\!\left(\frac{n^4}{n^5}\right)=O(n^{-1})\to0,
\]
and $R_n^{(2)}/s_n^2\to0$ in probability by Markov's inequality.

Combining, $\sum_i\mathbf{E}_{i-1}[X_i^2]/s_n^2\xrightarrow{p}1$,
and likewise for the primed coordinate by symmetry.

\noindent \emph{Cross term \eqref{Hcross}.}
$X_i$ is a function of $U_i$ and $X'_i$ of $U'_i$; given
$\mathcal{F}_{i-1}$ these are conditionally independent by
\eqref{eq:Bmove}, and $\mathbf{E}_{i-1}[X_i]=\mathbf{E}_{i-1}[X'_i]=0$
by \eqref{Xdef}, so
\[
\mathbf{E}_{i-1}[X_iX'_i]=\mathbf{E}_{i-1}[X_i]\,
\mathbf{E}_{i-1}[X'_i]=0\qquad\text{for every }i.
\]
Thus $\sum_i\mathbf{E}_{i-1}[X_iX'_i]=0$, and \eqref{Hcross} holds exactly.

All hypotheses of Theorem~\ref{thm:helland} hold, giving the claim.
\end{proof}

\begin{proof}[Proof of Theorem~\ref{mainthm0}]
Dividing \eqref{Zdecomp} by $s_n$,
\[
\Bigl(\frac{Z_n}{s_n},\frac{Z'_n}{s_n}\Bigr)
=\underbrace{\Bigl(\frac1{s_n}\sum_i X_i,\ \frac1{s_n}\sum_i X'_i\Bigr)}_{\Rightarrow\,N(0,I_2)\ \text{(Lemma~\ref{lem:cleanCLT})}}
+\underbrace{\Bigl(\frac1{s_n}\sum_i i(i-1)Y_i,\ \frac1{s_n}\sum_i i(i-1)Y'_i\Bigr)}_{\xrightarrow{p}\,0\ \text{(Lemma~\ref{lem:negligible})}} .
\]
By Slutsky's theorem, $\bigl(Z_n/s_n,Z'_n/s_n\bigr)\Rightarrow N(0,I_2)$.
Since $Z_n/s_n = \dfrac{M_n-\mu_n}{\sigma_n}$ and
$Z'_n/s_n = \dfrac{M'_n-\mu_n}{\sigma_n}$, the theorem follows. Asymptotic independence follows from convergence to the standard bivariate normal with identity covariance matrix.
\end{proof}

\subsection{Exact covariance and rate of decorrelation}

The asymptotic independence in Theorem~\ref{mainthm0} is complemented by the following exact result, which identifies the rate at which $\mathrm{Cov}(M_n,M'_n)\to 0$.

\begin{proposition}\label{prop:indep}
For all $n \geq 2,$
\[
\mathrm{Cov}(M_n, M'_n) = \frac{n-2}{3n(n-1)}.
\]
\end{proposition}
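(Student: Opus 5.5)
The plan is to compute $\mathbf{E}[M_nM'_n]$ exactly from the partial differential recurrence \eqref{PDEWn}. Writing $e_n=\mathbf{E}[M_nM'_n]$, we have
\[
W_n(1,1)=n!,\qquad \partial_uW_n(1,1)=\partial_vW_n(1,1)=n!\,\mu_n,\qquad \partial_{uv}W_n(1,1)=n!\,e_n .
\]
Since $\mu_n=(n+1)/3$ is known from \cite{WS96}, it suffices to show $e_n=\mu_n^2+\frac{n-2}{3n(n-1)}$.

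\emph{Step 1: mixed derivative of \eqref{PDEWn} at $u=v=1$.} I apply $\partial_u\partial_v$ to \eqref{PDEWn} and set $u=v=1$. Only terms in which each vanishing factor $(1-u)$ or $(1-v)$ is differentiated away survive.
\begin{itemize}
\item The term $n^2uvW_{n-1}$ contributes $n^2\bigl(W+W_u+W_v+W_{uv}\bigr)$, evaluated for $W=W_{n-1}$.
\item The term $2n\,uv(1-u)\partial_uW_{n-1}$ contributes $-2n\,(W_u+W_{uv})$, because $\partial_u$ must fall on $(1-u)$. Symmetrically, the $v$-term contributes $-2n\,(W_v+W_{uv})$.
\item The term $4uv(1-u)(1-v)\partial_{uv}W_{n-1}$ contributes $4W_{uv}$.
\item The last term contributes $(n-2)\,W_{n-2}(1,1)=(n-2)(n-2)!$.
\end{itemize}
Dividing by $n!$ and using the identifications above gives the scalar recurrence, valid for $n\ge3$:
\[
e_n=\frac{(n-2)^2}{n^2}\,e_{n-1}+1+2\mu_{n-1}-\frac{4\mu_{n-1}}{n}+\frac{n-2}{n^2(n-1)}.
\]

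\emph{Step 2: verify the closed form.} Rather than solve the recurrence directly, I verify the guess $e_n=\frac{(n+1)^2}{9}+\frac{n-2}{3n(n-1)}$ by induction. The base case is $n=2$: both $M_2$ and $M'_2$ equal $1$, so $e_2=1$ and the covariance is $0$, matching the formula. As a sanity check I would also compare with $n=3$ using the explicit $W_3$. Note that the factor $(n-2)^2/n^2$ kills the dependence on $e_1$, so the recursion effectively starts cleanly from $n=2$.

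For the inductive step, I substitute the guess for $e_{n-1}$ and $\mu_{n-1}=n/3$ into the right-hand side. The polynomial part should reduce to $(n+1)^2/9$, since $\frac{(n-2)^2}{n^2}\cdot\frac{n^2}{9}+1+\frac{2n}{3}-\frac43=\frac{(n+1)^2}{9}$. The rational parts must combine as
\[
\frac{(n-2)^2}{n^2}\cdot\frac{n-3}{3(n-1)(n-2)}+\frac{n-2}{n^2(n-1)}=\frac{n-2}{3n(n-1)}.
\]
The left side equals $\frac{(n-2)\bigl[(n-3)+3\bigr]}{3n^2(n-1)}$, which is exactly the right side.

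Subtracting $\mu_n^2$ then gives the proposition. I expect the main obstacle to be Step 1: correctly tracking which derivative terms vanish at $u=v=1$ and making sure the normalization by $n!$ is consistent for the $W_{n-2}$ term. The induction in Step 2 is routine algebra once the recurrence is right.
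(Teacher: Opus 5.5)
Your proposal is correct and follows the paper's proof: applying $\partial_u\partial_v$ to \eqref{PDEWn} at $u=v=1$ gives exactly the paper's recurrence \eqref{Erec} divided by $n$, and your term-by-term contributions match the paper's terms T1--T5. The only difference is in the last step: the paper solves the covariance recurrence \eqref{Crec} with a summation factor (variation of parameters) starting from $C_2=0$, whereas you verify the closed form by induction from the same base case, which is equally valid.
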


\begin{proof}
Define $E_n = \mathbf{E}[M_n M'_n].$ Applying
$\frac{1}{n!}\partial_{uv}$ to the PDE \eqref{PDEWn} and evaluating
at $u=v=1$ via the product rule
$\partial_{uv}(fg)|_1 = f_{uv}g + f_u g_v + f_v g_u + f\,g_{uv}$
and $(1-u)|_1=(1-v)|_1=0,$ the five terms contribute
(with $P_{n-1} = \mathbf{E}[M_{n-1}] = n/3$):
\begin{align*}
\mathrm{T1}\ (n^2 uv\cdot W_{n-1})&:
    \quad n\bigl(1 + 2P_{n-1} + E_{n-1}\bigr),\\
\mathrm{T2}\ (2nuv(1-u)\cdot\partial_u W_{n-1})&:
    \quad -2\bigl(P_{n-1} + E_{n-1}\bigr),\\
\mathrm{T3}\ (2nuv(1-v)\cdot\partial_v W_{n-1})&:
    \quad -2\bigl(P_{n-1} + E_{n-1}\bigr),\\
\mathrm{T4}\ (4uv(1-u)(1-v)\cdot\partial_{uv} W_{n-1})&:
    \quad \tfrac{4}{n}E_{n-1},\\
\mathrm{T5}\ ((n-2)(1-u)(1-v)\cdot W_{n-2})&:
    \quad \tfrac{n-2}{n(n-1)}.
\end{align*}
Assembling gives
\begin{equation}\label{Erec}
nE_n = \left(n-4+\tfrac{4}{n}\right)E_{n-1}
+ 2(n-2)P_{n-1} + n + \tfrac{n-2}{n(n-1)}.
\end{equation}
Set $C_n = \mathrm{Cov}(M_n, M'_n) = E_n - \mu_n^2.$
Substituting $E_n = C_n + \mu_n^2$ and $E_{n-1} = C_{n-1} + \mu_{n-1}^2$
into \eqref{Erec}, using $\mu_n^2 = (n+1)^2/9,$ $\mu_{n-1}^2 = n^2/9,$
and $P_{n-1} = n/3,$ the deterministic terms on the right of \eqref{Erec} are
\begin{align*}
&\left(n-4+\tfrac{4}{n}\right)\frac{n^2}{9}
+ \frac{2n(n-2)}{3} + n + \frac{n-2}{n(n-1)}
- n\cdot\frac{(n+1)^2}{9}\\
&= \frac{-6n^2+3n}{9} + \frac{2n(n-2)}{3} + n + \frac{n-2}{n(n-1)}
= \frac{n-2}{n(n-1)},
\end{align*}
where the intermediate step uses $\frac{(n-4+4/n)n^2 - n(n+1)^2}{9} = \frac{-6n^2+3n}{9}.$
Hence
\begin{equation}\label{Crec}
C_n = \frac{(n-2)^2}{n^2}\,C_{n-1} + \frac{n-2}{n^2(n-1)}.
\end{equation}
The homogeneous solution of \eqref{Crec} is
\[
C_n^{(h)} = K\prod_{k=3}^{n}\frac{(k-2)^2}{k^2}
= K\cdot\frac{4}{n^2(n-1)^2},
\]
since $\prod_{k=3}^n\frac{k-2}{k} = \frac{2}{n(n-1)}$ by telescoping.
By variation of parameters, the general solution is
\[
C_n = \frac{4}{n^2(n-1)^2}\left[K + \frac{1}{4}\sum_{k=3}^{n}(k-2)(k-1)\right].
\]
The sum evaluates as
\[
\sum_{k=3}^{n}(k-2)(k-1) = \sum_{j=1}^{n-2}j(j+1) = \frac{(n-2)(n-1)n}{3}.
\]
Since $M_2 = M'_2 = 1$ deterministically, $C_2 = 0,$ giving $K=0.$ Therefore
\[
C_n = \frac{4}{n^2(n-1)^2}\cdot\frac{(n-2)(n-1)n}{12} = \frac{n-2}{3n(n-1)}.
\]
\end{proof}

\bibliographystyle{alpha}
\bibliography{peak}

\end{document}